\documentclass[11pt, a4paper]{amsart}
\usepackage{mathrsfs,amsthm,amssymb,amsmath,url,cite}
\newcommand{\acts}{\curvearrowright}

\theoremstyle{plain}
    \newtheorem{thm}{Theorem}
    
    \newtheorem{lem}[thm]{Lemma}
    \newtheorem{prop}{Proposition}
    \newtheorem{cor}[thm]{Corollary}
    
    \newtheorem{quest}[thm]{Question}

\theoremstyle{definition}
    \newtheorem{ex}[thm]{Example}
    \newtheorem{defn}[thm]{Definition}



\newcommand{\fG}{{\bf G}}
\newcommand{\fH}{{\bf H}}

\DeclareMathOperator{\Sym}{Sym}

\newcommand{\rest}{{\upharpoonright}}



\DeclareMathOperator{\id}{id}\DeclareMathOperator{\Aut}{Aut}

\newcommand{\G}{{\mathscr G}}

\newcommand{\To}{\rightarrow}

\newcommand{\Hung}{\H}
\renewcommand{\G}{{\mathbf G}}
\renewcommand{\fH}{{\mathbf H}}

\title[Canonical Functions: a Proof via topological dynamics]{Canonical Functions:\\ a Proof via topological dynamics}
\author
{Manuel Bodirsky}
    \address{Institut f\"{u}r Algebra\\TU Dresden\\01062 Dresden\\Germany}
    \email{Manuel.Bodirsky@tu-dresden.de}
   \urladdr{http://www.math.tu-dresden.de/~bodirsky/}
\author
{Michael Pinsker}
	\address{Institute of Discrete Mathematics and Geometry, Technische Universit\"at Wien, Austria, and Department of Algebra, Charles University in Prague, Czech Republic}        \email{marula@gmx.at}
    \urladdr{http://dmg.tuwien.ac.at/pinsker/}
    \thanks{The first author has received funding from the European Research Council (Grant Agreement no. 681988, CSP-Infinity). The second author has received funding from project  P27600 of the  Austrian Science Fund (FWF)}
    \keywords{Canonical function, $\omega$-categorical structure, Ramsey property, oligomorphic permutation group, extremely  amenable permuation group}
    \subjclass[2010]{03-02, 20B27, 20B27, 22F50}

\date{\today}

\begin{document}
\maketitle

\begin{abstract}
Canonical functions are a powerful concept with numerous applications in the study of groups, monoids, and clones on countable structures with Ramsey-type  properties. In this short note, we present a proof of the existence of canonical functions in certain sets using topological dynamics, providing a shorter alternative to the original combinatorial argument. We moreover present equivalent algebraic characterisations of canonicity. 
\end{abstract}

\section{Introduction}
When $f\colon (\mathbb Q;<)\To(\mathbb Q;<)$ is any function from the order of the rational numbers to itself, then there are arbitrarily large finite subsets of $\mathbb Q$ on which $f$ ``behaves regularly"; that is, it is either strictly increasing, strictly decreasing, or constant. A direct (although arguably unnecessarily elaborate) way to see this is by applying Ramsey's theorem: two-element subsets of $\mathbb Q$ are colored with three colors according to the local behavior of $f$ on them (this yields, by the infinite version of Ramsey's theorem, even an infinite set on which $f$ behaves regularly, but this is beside the point for us). In particular, it follows that the closure of the set $\{\beta\, f \, \alpha\; |\; \alpha,\beta\in\Aut(\mathbb Q;<)\}$ in ${\mathbb Q}^{\mathbb Q}$, equipped with the pointwise convergence topology, contains a function which behaves regularly everywhere. This function of regular behavior is called \emph{canonical}.

More generally, a function $f\colon \Delta\To \Lambda$ between two structures $\Delta, \Lambda$ is called canonical if it behaves regularly in an analogous way, that is, if it sends tuples in $\Delta$ of the same type (in the sense of model theory, as in~\cite{Hodges}) to tuples the same type in $\Lambda$~\cite{BPT-decidability-of-definability, RandomMinOps, BP-reductsRamsey}. Similarly as in the example above, canonical functions can be obtained from $f$, in the fashion stated above, if $\Delta$ has sufficient Ramsey-theoretic properties (for example, if it is a countable Ramsey structure in the sense of~\cite{Topo-Dynamics}) and if $\Lambda$ is sufficiently small (for example countable and $\omega$-categorical)~\cite{BPT-decidability-of-definability,RandomMinOps,BP-reductsRamsey}. 

The concept of canonical functions has turned out useful in numerous applications: for classifying first-order reducts 
they are used in~\cite{agarwal, PongraczHensonReducts,Poset-Reducts,42,BodJonsPham,AgarwalKompatscher,LinmanPinsker,RandomMinOps}, 
for computing the model companion and the model-complete core of such reducts~\cite{MottetPinskerCores}, 
for complexity classification for constraint satisfaction problems (CSPs) in~\cite{BMPP16,equiv-csps,BodPin-Schaefer-both,Phylo-Complexity,KompatscherPham}, 
for decidability of meta-problems in the context of the CSPs in~\cite{BPT-decidability-of-definability}, 
for lifting algorithmic results from finite-domain CSPs to CSPs over infinite domains in~\cite{Bodirsky-Mottet}, 
for lifting algorithmic results from finite-domain CSPs to homomorphism problems from definable infinite structures 
to finite structures~\cite{locally-finite}, 
and  
for decidability questions in computations with atoms in~\cite{definable-homomorphisms}. Most of these applications are covered by a survey article published shortly after their invention~\cite{BP-reductsRamsey}. 

As indicated above, the technique is available for a function $f\colon \Delta\To\Lambda$, in particular, whenever $\Delta$ is a countable Ramsey structure and $\Lambda$ is countable and $\omega$-categorical, and the existence of canonical functions in the set $\overline{\{\beta\, f\, \alpha\; |\; \alpha\in \Aut(\Delta),\beta\in\Aut(\Lambda)\}}\subseteq \Lambda^\Delta$
was originally shown under these conditions by a combinatorial argument~\cite{BPT-decidability-of-definability,RandomMinOps,BP-reductsRamsey}. 
By the Kechris-Pestov-Todor\v{c}evi\'{c} correspondence~\cite{Topo-Dynamics},
a countable structure $\Delta$ is Ramsey (with respect to colorings of embeddings) if and only if its automorphism group $\Aut(\Delta)$ is \emph{extremely amenable}, meaning that every continuous action of it on a compact Hausdorff space has a fixed point. Moreover, by the theorem of Ryll-Nardzewski, Engeler, and Svenonius, two tuples in a countable $\omega$-categorical structure have the same type if and only if they lie in the same orbit with respect to the componentwise action of its automorphism group on tuples, and a countable structure is $\omega$-categorical if and only if its automorphism group is oligomorphic. Therefore both the definition of canonicity as well as the above-mentioned conditions implying their existence in sets of the form $\overline{\{\beta\, f\, \alpha\; |\; \alpha\in \Aut(\Delta),\beta\in\Aut(\Lambda)\}}$ can be formulated in the language of permutation groups.

It is therefore natural to ask for a perhaps more elegant proof of the existence of canonical functions via topological dynamics, reminiscent of the numerous proofs of combinatorial statements obtained in a similar fashion (cf.~the survey~\cite{bergelson} for Ergodic Ramsey theory;  \cite{Kechris-nonarchimedian} mentions some applications of extreme amenability). In this short note, 
we present such a proof. The proof was discovered by the authors at the Workshop on Algebra and CSPs at the Fields Institute in Toronto in 2011, where it was also presented (by the second author), 
 but has so far not appeared in print. 
We use the occasion of this note to present various equivalent characterisations of canonicity of functions that facilitate their use and
better explain their significance. 

\section{Canonicity}
\label{sect:prelims}
We use the notation $\fG \acts X$
to denote a permutation group $\fG$ acting on a set $X$. 
We make the convention that if $f \colon X \to Y$ is a function and $t = (t_1,\dots,t_k) \in X^k$, where $k\geq 1$, 
then $f(t) := (f(t_1),\dots,f(t_k)) \in Y^k$ denotes the $k$-tuple obtained by
applying $f$ to $t$ componentwise.

The following is an algebraic formulation of Definition~6 in~\cite{BPT-decidability-of-definability}.

\begin{defn}
Let $\fG \acts X$ and $\fH \acts Y$ be
permutation groups. 
A function $f \colon X \to Y$ is
called \emph{canonical with respect to $\fG$ and $\fH$} 
if for every finite tuple $t \in X^{<\omega}$ and every $\alpha \in \fG$
there exists $\beta \in \fH$ such that
$f\,\alpha(t) = \beta\, f(t)$. 
\end{defn}

Hence, functions that are canonical
with respect to $\fG$ and $\fH$
induce for each integer $k\geq 1$ a function from
the orbits of the componentwise action 
of $\fG$ of $X^k$
to the orbits of the componentwise action of 
$\fH$ on $Y^k$.

In order to formulate properties equivalent to canonicity we require some topological notions. 
We consider the set $Y^X$ of all functions 
from $X$ to $Y$ as a topological space equipped with the topology of pointwise convergence, i.e., the product topology where $Y$ is taken to be discrete. 
When $S\subseteq Y^X$, then we write $\overline S$ for the closure of $S$ in this space. In particular, when $\G \acts X$ is a permutation group, then $\overline \fG$ is the closure of $\G$ in $X^X$. Note
that $\overline \fG$ might no longer be a group, but it is still a monoid with respect to composition of functions.
For example, in the case of the full symmetric
group $\fG = \Sym(X)$ consisting of all permutations of $X$,  
 $\overline \fG$ is the transformation monoid of all injections in $X^X$. 

A permutation group ${\bf G} \acts X$ is called
\emph{oligomorphic} if for each $k \geq 1$
the componentwise action of $\bf G$ on $X^k$
has finitely many orbits. 
For oligomorphic permutation groups
we have the following equivalent characterisations of canonicity.

\begin{prop}\label{prop:canonical}
Let $\fG \acts X$ and $\fH \acts Y$ be permutation groups, where $X, Y$ are countable and $\fH \acts Y$ is oligomorphic.
Then for any function $f \colon X \to Y$
the following are equivalent.
\begin{enumerate}
\item $f$ is canonical with respect to $\fG$ and $\fH$;
\item for all $\alpha \in {\bf G}$ we
have $f \alpha \in \overline{\fH f}:= \overline{ \{ \beta f \mid \beta \in \fH\} }$;
\item for all $\alpha \in \fG$ 
there are $e_1,e_2 \in \overline{\fH}$ such that
$e_1 f \alpha = e_2 f$. 
\end{enumerate}
\end{prop}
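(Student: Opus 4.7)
The plan is to verify $(1)\Leftrightarrow(2)$, then $(3)\Rightarrow(1)$, and finally $(1)\Rightarrow(3)$.

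For $(1)\Leftrightarrow(2)$, I would unpack the product topology on $Y^X$: basic open neighborhoods of $f\alpha$ are prescriptions of values on a finite tuple $t\in X^{<\omega}$. Hence $f\alpha\in\overline{\fH f}$ means exactly that, for every finite $t$, some $\beta\in\fH$ has $\beta f(t)=f\alpha(t)$, which is (1). Oligomorphicity is not needed here.

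For $(3)\Rightarrow(1)$, the key is that elements of $\overline{\fH}$ can be approximated on any prescribed finite subset of $Y$ by elements of $\fH$. Given $e_1,e_2\in\overline{\fH}$ with $e_1 f\alpha=e_2 f$, an element $\alpha\in\fG$, and a tuple $t=(t_1,\ldots,t_k)\in X^k$, I would choose $\beta_1\in\fH$ matching $e_1$ on the finite tuple $f\alpha(t)$ and $\beta_2\in\fH$ matching $e_2$ on $f(t)$. Then $\beta_1 f\alpha(t)=e_1 f\alpha(t)=e_2 f(t)=\beta_2 f(t)$, so $\beta:=\beta_1^{-1}\beta_2\in\fH$ witnesses canonicity for $(\alpha,t)$.

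For $(1)\Rightarrow(3)$, the main direction where oligomorphicity enters, I would fix $\alpha\in\fG$, take $e_1:=\id$, and construct $e_2\in\overline{\fH}$ with $e_2 f=f\alpha$. Define a partial function $h\colon f(X)\to Y$ by $h(f(x)):=f\alpha(x)$; applying (1) to pairs $(x,x')$ with $f(x)=f(x')$ shows this is well-defined, and applying (1) to a tuple $t$ shows that some $\beta_t\in\fH$ agrees with $h$ on $f(t)$, so that $h$ lies in the restriction of $\overline{\fH}$ to $f(X)$. To promote $h$ to some $e_2\in\overline{\fH}$ defined on all of $Y$, I would enumerate $Y\setminus f(X)=\{z_1,z_2,\ldots\}$ and inductively define $e_2(z_n)$, maintaining the invariant that on every finite subset $F$ of the current domain, some element of $\fH$ realizes $e_2|_F$; then for each finite $F\subseteq X$ the restriction $e_2 f|_F=f\alpha|_F$ matches an element of $\fH f$, so $e_2\in\overline{\fH}$ and $e_2 f = f\alpha = e_1 f\alpha$ as required.

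The main obstacle is the extension step in $(1)\Rightarrow(3)$: at each stage of the induction one must argue that a suitable value for $e_2(z_n)$ exists, i.e., that the set of permissible values is nonempty despite the accumulated orbit-constraints coming from all finite subsets of the previously handled points. This is precisely where oligomorphicity of $\fH$ is essential, since it guarantees that only finitely many $\fH$-orbits appear among tuples of each arity, so that the constraints on $e_2(z_n)$ at each stage reduce to realizing a type over a finite parameter set, which is always possible in an $\omega$-categorical structure.
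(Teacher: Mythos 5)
Your arguments for $(1)\Leftrightarrow(2)$ and $(3)\Rightarrow(1)$ are correct and are exactly the ``straightforward from the definitions'' steps of the paper. The gap is in $(1)\Rightarrow(3)$: you fix $e_1=\id$ and try to build $e_2\in\overline{\fH}$ with $e_2\,f=f\alpha$. That is not the statement (3); it is the strictly stronger condition discussed right after the proposition, and it is \emph{false} for canonical functions in general. In Example~\ref{ex:trung} ($\fG=\fH=\Aut(\mathbb Q;<)$, $f$ an isomorphism onto $(\mathbb Q\setminus\{0\};<)$, and $\alpha$ with $f(a)<0<f\alpha(a)$), any $e\in\overline{\fH}$ with $f\alpha=e\,f$ would have to fix $0$ and send $f(a)<0$ to $f\alpha(a)>0$, which is impossible. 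So your construction must break down, and it does so at the extension step: at stage $n$ the ``current domain'' is $f(X)\cup\{z_1,\dots,z_{n-1}\}$, which is infinite, so the constraints on $e_2(z_n)$ are a type over an \emph{infinite} parameter set, not a finite one, and oligomorphicity gives no guarantee that it is realized. Concretely, in the example the partial map $h=f\alpha f^{-1}$ on $\mathbb Q\setminus\{0\}$ sends some point below $0$ to a point above $0$, and one checks that no value for $e_2(0)$ is compatible with all finite restrictions being realized in $\Aut(\mathbb Q;<)$; the needed ``gap'' in the image sits at an irrational cut, not at $0$.

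The repair is to produce $e_1$ and $e_2$ \emph{simultaneously} by a compactness argument, rather than freezing $e_1=\id$. This is the paper's route: canonicity (in the form (2)) says precisely that $f\alpha=f$ holds locally modulo $\fH$, i.e., there are $\beta^j\in\fH$ with $\beta^j f\to f\alpha$; one then considers the pairs $(\id,\beta^j)$ in $\overline{\fH}^2$ modulo the left diagonal action of $\fH$ and uses compactness of the quotient (Lemma~\ref{lem:space}, via oligomorphicity, countability of $X$, and a K\H{o}nig's lemma argument) to extract an accumulation point $[(e_1,e_2)]_\sim$; correcting the approximating pairs by suitable $\delta^j\in\fH$ on the left yields $e_1\,f\alpha=e_2\,f$. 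This is Lemma~\ref{lem:lift} applied to $f\alpha$ and $f$. The moral is that the left factor $e_1$ is genuinely needed: canonicity only controls orbits of tuples, and recovering an actual element of $\overline{\fH}$ requires twisting both sides.
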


A stronger condition would be to require that 
for all $\alpha \in \fG$ 
there is an $e \in \overline{\fH}$ such that
$f \alpha = e f$. 
To illustrate that this is strictly stronger, already
when $\fG = \fH$, 
we give an explicit example.

\begin{ex}[thanks to Trung Van Pham]\label{ex:trung}
Let $\fG:=\fH := \Aut({\mathbb Q};<)$. Note that $({\mathbb Q};<)$
and $({\mathbb Q} \setminus \{0\};<)$ are isomorphic, and let $f$ be such an isomorphism. Then $f$, viewed as a function 
 from ${\mathbb Q} \to {\mathbb Q}$, is clearly canonical with respect to $\fG$ and $\fH$. 
But $f$ does not satisfy the stronger condition above.
To see this, choose
$a \in {\mathbb Q}$ such that $f(a)<0$, and pick $\alpha \in \fG$ such that $f\alpha(a)>0$. Since 
the image of $f \alpha$ equals the image of $f$, 
any $e \in \overline{\fH}$ such that 
$f \alpha = e f$ must fix $0$. 
Since $e$ must also preserve $<$, it cannot map
$f(a) < 0$ to 
$f \alpha(a) > 0$. Hence, there is no $e \in \overline{\fH}$ such that
$f \alpha = e f$.  \qed
\end{ex}



In Proposition~\ref{prop:canonical}, the
implications from (1) to (2) and from (3) to (1)
follow straightforwardly from
the definitions. For the implication from (2) to (3)
we need a lift lemma, which is in essence from~\cite{BPP-projective-homomorphisms}. 
This lemma has been applied frequently
lately~\cite{Phylo-Complexity,BartoPinskerDichotomy,Topo, Bodirsky-Mottet}, in various slightly different forms. We need yet another formulation here; since the lemma is a consequence of a compactness argument which we need in any case for the canonisation theorem in Section~\ref{sect:canonisation}, we present its proof.

Let $\fH\acts Y$ be a permutation group, and let $f,g\in Y^X$, for some $X$. We say that \emph{$f=g$ holds locally modulo $\fH$} if for all finite $F\subseteq X$ there exist $\beta_1,\beta_2 \in \fH$ such that $\beta_1\, f\rest_F=\beta_2\, g\rest_F$. We say that \emph{$f=g$ holds globally modulo $\fH$ (modulo $\overline{\fH}$)} if there exist $e_1,e_2\in \fH$ ($e_1,e_2\in\overline{\fH}$, respectively) such that $e_1\, f=e_2\, g$.

Of course, if $f=g$ holds globally modulo $\overline{\fH}$, then it holds locally modulo $\fH$. On the other hand, if $f=g$ holds locally modulo $\fH$, then it need not hold globally modulo $\fH$: an example are the functions $f$ and $f\alpha$ in Example~\ref{ex:trung}, for the reasons explained above. 
However, there exist $e_1,e_2 \in \overline{\fH}$ such that
$e_1\,f = e_2\, f\, \alpha$, so $f=f\alpha$ holds globally modulo $\overline{\fH}$. This is true in general, as we see in the following lift lemma.

\begin{lem}\label{lem:lift}
Let $\fH\acts Y$ be an oligomorphic permutation group acting on a countable set $Y$, let $I$ be a countable index set, and let $X_i$ be a countable set for every $i\in I$. 
 Let $f_i,g_i$ be functions in $Y^{X_i}$ such that $f_i=g_i$ holds locally modulo $\fH$ for all $i\in I$. Then $f_i=g_i$ holds globally modulo $\overline{\fH}$ for all $i\in I$, and in fact 
  there exist $e, e_i \in {\overline \fH}$ such that $e\, f_i=e_i\, g_i$ for all $i\in I$.
\end{lem}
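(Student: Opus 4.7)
The plan is a K\"onig-style compactness argument, using the oligomorphicity of $\fH$ to collapse an a priori infinite tree of partial solutions to a finitely branching one. First I would enumerate $Y$, $I$, and each $X_i$, and for each $n$ fix finite $J_n\subseteq I$ and $F_i^n\subseteq X_i$ for $i\in J_n$, chosen so that every pair $(i,x)$ with $i\in I,x\in X_i$ is eventually covered; let $S_n\subseteq Y$ be the finite union of the first $n$ elements of $Y$ with $\bigcup_{i\in J_n}(f_i(F_i^n)\cup g_i(F_i^n))$. A preliminary step is to uniformise the individual local conditions: for each $n$ there exist $\beta^n,\beta_i^n\in\fH$ (for $i\in J_n$) such that $\beta^n f_i\rest_{F_i^n}=\beta_i^n g_i\rest_{F_i^n}$ for every $i\in J_n$. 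Indeed, fix any $\beta^n\in\fH$; for each $i$ take $\gamma_{i,1},\gamma_{i,2}\in\fH$ witnessing $\gamma_{i,1}f_i\rest_{F_i^n}=\gamma_{i,2}g_i\rest_{F_i^n}$ and set $\beta_i^n:=\beta^n\gamma_{i,1}^{-1}\gamma_{i,2}$.

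Next I would introduce a tree $\tilde T$ whose level-$n$ nodes are diagonal $\fH$-orbits of tuples $\tau=(u,(v_i)_{i\in J_n})$ of functions $S_n\to Y$ satisfying (a) each component is the restriction to $S_n$ of some element of $\fH$, and (b) $u(f_i(x))=v_i(g_i(x))$ for all $i\in J_n$ and $x\in F_i^n$. The parent of a level-$(n+1)$ orbit is the level-$n$ orbit to which any of its representatives restricts (well defined, since restriction commutes with the diagonal $\fH$-action). By oligomorphicity, $\fH$ has only finitely many orbits on tuples of any fixed length, so each level of $\tilde T$ is finite; by the uniformised local condition, each level is non-empty. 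K\"onig's lemma then supplies an infinite branch $(\nu_n)_{n\geq 0}$.

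To conclude I would realise this branch by concrete representatives. Inductively pick $\tau_n\in\nu_n$ so that $\tau_{n+1}$ restricts to $\tau_n$: given $\tau_n$ and any $\tau'_{n+1}\in\nu_{n+1}$, the restriction of $\tau'_{n+1}$ lies in $\nu_n$, so some $\gamma\in\fH$ sends it to $\tau_n$, and $\gamma\cdot\tau'_{n+1}$ is the desired representative. Writing $\tau_n=(u_n,(v_{i,n})_{i\in J_n})$, define $e(y):=u_n(y)$ and $e_i(y):=v_{i,n}(y)$ for any sufficiently large $n$; compatibility makes these well defined on all of $Y$. Condition (a) then gives $e,e_i\in\overline\fH$, since on every finite subset of $Y$ these functions agree with an element of $\fH$, and condition (b) delivers $ef_i=e_ig_i$. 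The main obstacle is picking the right equivalence in the second step: the naive tree of partial functions is infinitely branching, and one has to quotient by the diagonal $\fH$-action (not by two independent actions on $u$ and on the $v_i$) so that condition (b) survives the quotient while each level becomes finite by oligomorphicity.
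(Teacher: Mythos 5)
Your proof is correct. The key points all check out: the uniformisation $\beta_i^n:=\beta^n\gamma_{i,1}^{-1}\gamma_{i,2}$ does give a common left factor on each finite stage; condition (b) is invariant under the \emph{diagonal} $\fH$-action, so it descends to orbits; oligomorphicity makes each level of the tree finite (a tuple of functions on the finite set $S_n$ is just a long tuple in $Y$); the restriction maps commute with the diagonal action, so the parent relation and the inductive choice of coherent representatives along the K\Hung{o}nig branch work; and agreement with some element of $\fH$ on every finite set is exactly membership in $\overline{\fH}$. Your route differs from the paper's in organisation rather than substance. The paper first establishes that $Y^X/_\sim$ is a compact Hausdorff space (Lemma~\ref{lem:space}, itself proved by embedding into a product of finite discrete spaces and a K\Hung{o}nig-type argument), then proves the lift lemma by taking an accumulation point of the tuples $(\id,\beta_0^j,\dots,\beta_{n-1}^j)$ in $\overline{\fH}^{\,n+1}/_\sim$ for finite $I$, and finally handles countable $I$ by a second accumulation-point argument in a product space. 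You instead inline the compactness: a single K\Hung{o}nig argument on diagonal $\fH$-orbits of finite partial data, which treats finite and countable $I$ uniformly and never constructs converging sequences in $\overline{\fH}^{\,n+1}$. The shared-first-coordinate trick (your single $u$-component, the paper's $\id$ in the first slot) is the same in both. What the paper's packaging buys is reuse: the compact space $Y^X/_\sim$ is needed again for the canonisation theorem, where extreme amenability is applied to an action on it, so factoring the compactness out as Lemma~\ref{lem:space} pays for itself; your argument is more elementary and self-contained but would not by itself provide the space on which Theorem~\ref{thm:canonisation} acts. One small point to make explicit if you write this up: choose $J_n$, $F_i^n$, $S_n$ increasing in $n$, since the parent map and the coherence of representatives tacitly use this monotonicity.
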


To prove Lemma~\ref{lem:lift}, it is convenient to work with a certain compact Hausdorff space that we also use for the canonisation theorem in Section~\ref{sect:canonisation}. 
Let $\fH\acts Y$ be a permutation group, and $X$ be a set. On $Y^X$, define an equivalence relation $\sim$ by setting $f\sim g$ if $f\in \overline{\fH\, g}$, i.e., if $f=g$ holds locally modulo $\fH$; here, transitivity and symmetry follow from the fact that $\fH$ is a group. 
The following has essentially been shown in~\cite{Topo-Birk} (though for the finer equivalence relation of global equality modulo $\fH$), but we give an argument for the convenience of the reader since it is used so often (cf.~for example~\cite{BodJunker,wonderland, BPP-projective-homomorphisms, BKOPP,BKOPP-equations}).

\begin{lem}
\label{lem:space}
If $\fH\acts Y$ is oligomorphic, and $X$ is countable, then the space $Y^X/_\sim$ is a compact Hausdorff space.
\end{lem}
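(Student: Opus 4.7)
The plan is to identify $Y^X/_\sim$ with a closed subspace of a compact Hausdorff product of finite discrete spaces. For each finite tuple $t \in X^{<\omega}$, oligomorphicity of $\fH$ ensures that $Y^{|t|}/\fH$, the set of orbits of the componentwise action of $\fH$ on $Y^{|t|}$, is finite; let $Z := \prod_{t \in X^{<\omega}} Y^{|t|}/\fH$ with each factor discrete. By Tychonoff, $Z$ is compact Hausdorff. The ``type map'' $\Phi \colon Y^X \to Z$ sending $f$ to $(\fH \cdot f(t))_{t}$ is continuous, since for each $t$ the preimage of an orbit $O$ is the open set $\{f : f(t) \in O\}$; and because $\fH$ is a group, $\Phi(f)=\Phi(g)$ is equivalent to $f \sim g$. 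So $\Phi$ descends to an injection $\widetilde\Phi \colon Y^X/_\sim \hookrightarrow Z$.

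Next, I would check that the image of $\widetilde\Phi$ is closed. Call $(O_t)_t \in Z$ \emph{coherent} if, whenever $t'$ arises from $t$ by coordinate selection, $O_{t'}$ is the corresponding projection of $O_t$; coherence is a conjunction of conditions each involving only two coordinates, so the coherent tuples form a closed subset of $Z$. A straightforward stepwise construction along an enumeration of $X$ (at stage $n$, use coherence to extend a chosen representative of $O_{(x_1,\dots,x_n)}$ to one of $O_{(x_1,\dots,x_{n+1})}$) shows every coherent tuple is realized by some $f \in Y^X$, so the image of $\widetilde\Phi$ equals the closed set of coherent tuples.

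The main obstacle is verifying that $\widetilde\Phi$ is a homeomorphism onto its image, since continuity alone does not suffice. The idea is to exhibit matching sub-bases. The sets $V_{t,s} := \{f \in Y^X : f(t) \in \fH \cdot s\}$, for $t \in X^{<\omega}$ and $s \in Y^{|t|}$, are $\sim$-saturated and clopen in $Y^X$ (clopen because the complement is a union of finitely many similar sets, by oligomorphicity). Every basic cylinder $\{f : f(t)=s\}$ has $\sim$-saturation equal to $V_{t,s}$: indeed, if $g(t) = \beta s$ with $\beta \in \fH$, then $\beta^{-1} g$ lies in the cylinder and is $\sim$-equivalent to $g$. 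Hence every open $\sim$-saturated set in $Y^X$ is a union of the $V_{t,s}$, so the images of the $V_{t,s}$ form a sub-basis for the quotient topology on $Y^X/_\sim$, and under $\widetilde\Phi$ they correspond precisely to the sub-basic opens of $Z$ that fix a single coordinate. Therefore the quotient topology coincides with the subspace topology pulled back via $\widetilde\Phi$, and $Y^X/_\sim$ is homeomorphic to a closed subset of the compact Hausdorff space $Z$, hence itself compact Hausdorff.
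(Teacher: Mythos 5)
Your proof is correct and follows essentially the same route as the paper: both identify $Y^X/_\sim$ with a closed subspace of a compact product of the finite discrete orbit spaces attached to finite pieces of $X$ (the paper indexes by finite subsets and uses K\"onig's lemma for closedness, you index by finite tuples and use coherence plus a stepwise extension, which is the same compactness idea). Your explicit verification of openness via the saturated clopen sets $V_{t,s}$ merely fills in a step the paper declares obvious.
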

\begin{proof}
We represent the space in such a way that this becomes obvious. Extend the definition of the equivalence relation $\sim$ to all spaces $Y^F$, where $F\subseteq X$. When $F$ is finite, then $Y^F/_\sim$ is finite and discrete, because $\fH$ is oligomorphic. Hence, the space 
$$
\prod_{F\in [X]^{<\omega}} Y^F/_\sim
$$
is compact. The mapping $\xi$ from $Y^X/_\sim$ into this space defined by 
$$
[g]_\sim\mapsto \big ([g\rest_F]_\sim\;|\; F\in [X]^{<\omega} \big)
$$ is well-defined. In fact, $\xi$ is a homeomorphism onto a closed subspace thereof. To see this, note that injectivity follows from the definition of the equivalence relation $\sim$, and likewise continuity, since the topology on $Y^X/_\sim$ is precisely given by the behavior of functions on finite sets, modulo the equivalence $\sim$. The fact that the image of $\xi$ is closed follows from the fact that $X$ is countable: when we have, in the range of $\xi$, tuples $([g_i\rest_F]_\sim\;|\; F\in [X]^{<\omega} \big)$ for each $i\in\omega$, and the sequence of these tuples converges in $\prod_{F\in [X]^{<\omega}} Y^F/_\sim$, then a function $g\in Y^X$ such that $([g\rest_F]_\sim\;|\; F\in [X]^{<\omega} \big)$ is the limit of the sequence can be constructed by a standard argument using K\Hung{o}nig's tree lemma. Openness of the mapping $\xi$ is then also obvious. It follows that $Y^X/_\sim$ is indeed a compact Hausdorff space.
\end{proof}

We remark that when $\fH$ is the automorphism group of an $\omega$-categorical first-order structure on $Y$, then the space $Y^X/_\sim$ in Lemma~\ref{lem:space} is nothing but the type space for the theory of that structure with variables indexed by the set $X$. Let us also mention that the condition of $X$ being countable is necessary; cf.~Examples 4.5 and~4.7 in~\cite{schneider}.

\begin{proof}[Proof of Lemma~\ref{lem:lift}]
First assume that $I$ is finite, and write $I=\{0,\ldots, n-1\}$. For each $0\leq i\leq n-1$, we have $f_i\in\overline{\fH\, g_i}$; since $X_i$ is countable, there is a sequence $(\beta_i^{j}\, g_i)_{j\in\omega}$  converging to $f_i$. Now consider the set
$$
S:= \{(\id ,\; \beta_0^j,\ldots,\beta_{n-1}^j )\mid j \in {\omega}\}\; ,
$$
viewed as a subset of the space $\overline{\fH}^{n+1}$, with  $\id$ denoting the identity function in $\fH$.
The space $\overline{\fH}^{n+1}$ can be viewed naturally as a closed subspace of ${(Y^{n+1})}^{(Y^{n+1})}$, and the equivalence relation $\sim$ induced on the latter by the componentwise, oligomorphic action of $\fH$ on $Y^{n+1}$ restricts to $\overline{\fH}^{n+1}$ since this space is invariant under that action. Factoring $\overline{\fH}^{n+1}$ by $\sim$, we obtain a compact space by Lemma~\ref{lem:space}. The equivalence classes of the elements of $S$ have an accumulation point in $(\overline{\fH}^{n+1})_\sim$, which we write as $[(e,e_0,\ldots,e_{n-1})]_\sim$, for some $e,e_0,\ldots,e_{n-1}\in\overline{\fH}$. Hence, there exist $\delta^{j}\in \fH$, for ${j\in\omega}$, such that $(\delta^{j},\delta^{j} \beta_0^{j},\ldots,\delta^{j} \beta_{n-1}^{j})$ converges to $(e,e_0,\ldots,e_{n-1})$. Since for every $0\leq i\leq {n-1}$ we have that $(\beta_i^{j}\, g_i)_{j\in\omega}$ converges to $f_i$, we obtain that $(\delta^j \beta_i^{j}\, g_i)_{j\in\omega}$ converges to $e f_i$; on the other hand, it converges to $e_i g_i$, proving $e f_i= e_i g_i$.

Now assume that $I$ is countably infinite, and assume $I=\omega$. 
By the above, we obtain for every $n\geq 1$ elements $e^n,e_0^n,\ldots,e_{n-1}^n\in\fH$ such that $e^n f_i=e^n_i g_i$ for all $0\leq i \leq n-1$. We can embed the sequences $(e^n,e_0^n,\ldots,e_{n-1}^n)\in\overline{\fH}^{n+1}$ into the product space
$$
\prod_{n\geq 1} \overline{\fH}^{n+1}
$$
by first expanding them to a sequence in $\fH^\omega$ by adding, an infinite number of times, the identity function $\id\in\fH$, and then via the identification of $\fH^\omega$ with a closed subspace of above product space, as in Lemma~\ref{lem:space}. Factoring every component $\overline{\fH}^{n+1}$ of the latter by the equivalence relation $\sim$ induced by the action of $\fH$ on the left, we obtain a compact space. There the equivalence classes of the sequences $(e^n,e_0^n,\ldots,e_{n-1}^n)$ have an accumulation point, namely the equivalence class induced by a sequence $(e,e_0,\ldots)\in\fH^\omega$. Similarly as in the case where $I$ was finite, we conclude $ef_i=e_ig_i$ for all $i\in\omega$.
\end{proof}

The implication from (2) to (3) in Proposition~\ref{prop:canonical} now is a direct consequence of Lemma~\ref{lem:lift}.

\section{Canonisation}
\label{sect:canonisation}
The following is the \emph{canonisation theorem}, first proved combinatorially in~\cite{BPT-decidability-of-definability} in a slightly more specialized context. 

\begin{thm}\label{thm:canonisation}
Let $\G\acts X$, $\fH\acts Y$ be permutation groups, where $X$ is countable, $\G$ is extremely amenable, and $\fH$ is oligomorphic. Let $f\colon X\To Y$. Then
$$
\overline{\fH\, f\, \G}:= \overline{\{\beta\, f\, \alpha\;|\; \alpha\in \G, \beta\in\fH\}}
$$
contains a canonical function with respect to $\G$ and $\fH$.
\end{thm}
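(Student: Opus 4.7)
My plan is to realise a canonical function as a fixed point of the right action of $\G$ on a suitable compact Hausdorff subspace of $Y^X/_\sim$, and then invoke extreme amenability. By the equivalence $(1)\Leftrightarrow(2)$ of Proposition~\ref{prop:canonical}, finding a canonical $g_*\in\overline{\fH f\G}$ is the same as finding $g_*\in\overline{\fH f\G}$ with $g_*\alpha\sim g_*$ for every $\alpha\in\G$, that is, a fixed point of the right $\G$-action on the $\sim$-quotient restricted to $\overline{\fH f\G}$.

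First I would verify that $C:=\overline{\fH f\G}$ is closed under the left $\fH$-action (since $Y$ is discrete every function $Y\to Y$ is continuous, hence left composition is continuous on $Y^X$) and under the right $\G$-action (right composition by a fixed function is continuous on $Y^X$). Consequently $C$ is a union of $\sim$-classes, so $C/_\sim$ is a well-defined subset of $Y^X/_\sim$; moreover its preimage under the quotient map is the closed set $C$, so $C/_\sim$ is closed in the compact Hausdorff space provided by Lemma~\ref{lem:space}, hence itself compact. The right $\G$-action descends to $C/_\sim$: if $g\sim g'$ and $\beta_i g'\to g$ with $\beta_i\in\fH$, then right-composing with $\alpha$ gives $\beta_i g'\alpha\to g\alpha$, so $g\alpha\sim g'\alpha$.

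The crucial step is to show that this right action $\G\times C/_\sim\to C/_\sim$ is jointly continuous. Given a basic neighbourhood $N=\{[h]_\sim:h\rest_F\sim g_0\alpha_0\rest_F\}$ of $[g_0\alpha_0]_\sim$, with $F\subseteq X$ finite, I would set $F':=\alpha_0(F)$ and form the product of the neighbourhood $V:=\{\alpha\in\G:\alpha\rest_F=\alpha_0\rest_F\}$ of $\alpha_0$ with the basic neighbourhood $W:=\{[h]_\sim:h\rest_{F'}\sim g_0\rest_{F'}\}$ of $[g_0]_\sim$. For $(\alpha,[h]_\sim)\in V\times W$, because $F'$ is finite there is a single $\beta\in\fH$ with $h(y)=\beta(g_0(y))$ for all $y\in F'$, and then for $x\in F$ one computes $h(\alpha(x))=h(\alpha_0(x))=\beta(g_0\alpha_0(x))$, so $h\alpha\rest_F\sim g_0\alpha_0\rest_F$, that is, $[h\alpha]_\sim\in N$.

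With joint continuity in hand, extreme amenability of $\G$ produces a fixed point $[g_*]_\sim\in C/_\sim$. Any representative $g_*\in C=\overline{\fH f\G}$ then satisfies $g_*\alpha\sim g_*$ for every $\alpha\in\G$, and characterisation (2) of Proposition~\ref{prop:canonical} shows that $g_*$ is canonical with respect to $\G$ and $\fH$. The \emph{main obstacle} I anticipate is precisely the verification of joint continuity: the topology on $Y^X/_\sim$ is specified by behaviour on finite subsets of $X$ modulo $\sim$, while the topology on $\G$ encodes pointwise convergence on $X$, so the two have to be aligned carefully via the trick of pulling $F$ back along $\alpha_0$ to $F'=\alpha_0(F)$. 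Once this alignment is established, extreme amenability closes the argument, and no appeal to Lemma~\ref{lem:lift} is needed.
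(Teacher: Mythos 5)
Your proposal is correct and follows essentially the same route as the paper: the paper also realises the canonical function as a fixed point of the ($\G$-)action $(\alpha,[g]_\sim)\mapsto[g\,\alpha^{-1}]_\sim$ on the compact space $\overline{\fH f\G}/_\sim$ and invokes extreme amenability, the only cosmetic difference being that it composes with $\alpha^{-1}$ to have a genuine left action (your right action converts to this since inversion in $\G$ is continuous, and the fixed points are the same). Your explicit verification of saturation of $\overline{\fH f\G}$ under $\sim$ and of joint continuity via $F'=\alpha_0(F)$ fills in details the paper dismisses as clear, and you are right that Lemma~\ref{lem:lift} is not needed here.
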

\begin{proof}
The space $\overline{\fH\, f\, \G}/_\sim$ is a closed subspace of
the compact Hausdorff space
$Y^X/_\sim$ from Lemma~\ref{lem:space}, and hence
is a compact Hausdorff space as well. We define a continuous action of $\G$ on this space by
$$
(\alpha, [g]_\sim)\mapsto [g\,\alpha^{-1}]_\sim\; .
$$
Clearly, this assignment is a function, it is a group action, and it is continuous. Since $\G$ is extremely amenable, the action has a fixed point $[g]_\sim$. Any member $g$ of this fixed point is canonical: whenever $\alpha\in\G$, then $[g\,\alpha]_\sim=[g]_\sim$, which is the definition of canonicity.
\end{proof}

In applications of Theorem~\ref{thm:canonisation} (e.g., in~\cite{agarwal, PongraczHensonReducts,Poset-Reducts,42,BodJonsPham,AgarwalKompatscher,LinmanPinsker,RandomMinOps,BMPP16,equiv-csps,BodPin-Schaefer-both,Phylo-Complexity,KompatscherPham,BPT-decidability-of-definability,Bodirsky-Mottet,definable-homomorphisms,BP-reductsRamsey}), one usually needs the following special case of the above situation. It states, roughly, that whenever we have a finite arity function $f$ on a countable set, and an oligomorphic extremely amenable permutation group $\G$ on the same set, then we can obtain from $f$ and $\G$, using composition and topological closure,  a canonical function whilst retaining finite information about $f$.

In the following statement, for $m\geq 1$ we write $\fG^m$ for the natural action of $\fG^m$ on $X^m$ given by $((\alpha_1,\ldots,\alpha_m), (x_1,\ldots,x_m))\mapsto (\alpha_1(x_1),\ldots,\alpha_m(x_m))$. Moreover, we denote the pointwise stabilizer of $c^1,\ldots,c^n\in X^m$  in $\fG^m$ by $(\fG^m,c^1,\ldots,c^n)$.

\begin{cor}\label{cor:constants}
Let $\G\acts X$ be an oligomorphic extremely amenable permutation group acting on a countable set $X$. Let $f\colon X^m\To X$ for some $m\geq 1$, and let $c^1,\ldots,c^n\in X^m$ for some $n\geq 1$. Then there exists
$$g\in \overline{\G\, f\, \G^m}$$
such that
\begin{itemize}
\item $g$ agrees with $f$ on $\{c^1,\ldots,c^n\}$, and
\item $g$ is canonical with respect to the groups $(\G^m,c^1,\ldots,c^n)$ and $\G$.
\end{itemize}
\end{cor}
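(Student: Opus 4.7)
The plan is to invoke Theorem~\ref{thm:canonisation} with carefully chosen subgroups engineered so that the canonical function produced also agrees with $f$ on $c^1,\ldots,c^n$. Set $\fG_0 := (\G^m,c^1,\ldots,c^n)$ as in the statement, and let $\fH$ denote the pointwise stabilizer in $\G$ of the finite set $D := \{f(c^1),\ldots,f(c^n)\} \subseteq X$. The point of this choice is that for any $\alpha \in \fG_0$ and $\beta \in \fH$ one has $\beta\, f\, \alpha(c^i) = \beta\, f(c^i) = f(c^i)$ for every $i$, so any limit of such expressions automatically agrees with $f$ on the given tuples.

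Before applying the theorem I need to verify that $\fG_0$ is extremely amenable and that $\fH$ is oligomorphic. The latter is the standard fact that pointwise stabilizers of finite sets in oligomorphic permutation groups remain oligomorphic. The former first requires that $\G^m$ be extremely amenable, which follows because extreme amenability is preserved under finite products; and then that $\fG_0$ itself remain extremely amenable. Observing that $\fG_0 = \prod_{j=1}^m \G_{F_j}$ with $F_j := \{c^i_j : 1 \le i \le n\} \subseteq X$, this reduces to the fact that $\G_F$ is extremely amenable whenever $F\subseteq X$ is finite. Via the Kechris--Pestov--Todorcevic correspondence, extreme amenability of the oligomorphic group $\G$ corresponds to the Ramsey property of the canonical structure with automorphism group $\overline{\G}$, and this property is preserved upon naming finitely many constants.

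With the hypotheses checked, Theorem~\ref{thm:canonisation} applied to $f \colon X^m \to X$ with $\fG_0$ and $\fH$ produces a function $g \in \overline{\fH\, f\, \fG_0}$ that is canonical with respect to $\fG_0$ and $\fH$. All three required properties then fall out immediately: since $\fH \subseteq \G$ and $\fG_0 \subseteq \G^m$ we have $g \in \overline{\G\, f\, \G^m}$; writing $g$ as a pointwise limit of expressions $\beta\, f\, \alpha$ with $\alpha\in\fG_0$ and $\beta\in\fH$, the identity above forces $g(c^i)=f(c^i)$ for each $i$; and canonicity of $g$ with respect to $\fG_0$ and $\fH$ upgrades without change to canonicity with respect to $\fG_0$ and $\G$, because any witness $\beta\in\fH$ required by the definition automatically lies in $\G$.

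The main obstacle is the extreme amenability of the pointwise stabilizers $\G_F$ of finite sets; once this standard fact is granted, the remainder is an essentially mechanical specialization of Theorem~\ref{thm:canonisation}.
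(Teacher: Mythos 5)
Your proof is correct and follows essentially the same route as the paper: specialize Theorem~\ref{thm:canonisation} to the stabilizer $(\G^m,c^1,\ldots,c^n)$, relying on the known facts (cited in the paper to~\cite{BPT-decidability-of-definability}) that finite products and stabilizers (open subgroups) of extremely amenable groups are extremely amenable, and that stabilizers of oligomorphic groups stay oligomorphic. Your choice of the target group as the pointwise stabilizer of $f(c^1),\ldots,f(c^n)$ --- which makes agreement with $f$ on the constants automatic, and then upgrading canonicity to the full group $\G$ --- cleanly supplies a detail that the paper's two-line proof leaves implicit.
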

\begin{proof}
The group $\G^m$ is obviously extremely amenable. Moreover, it is known that so is any stabilizer of it (in fact, every open subgroup; cf.~\cite{BPT-decidability-of-definability}). The statement therefore follows from Theorem~\ref{thm:canonisation}.
\end{proof}

\section{An Open Problem}
Is there a converse of Theorem~\ref{thm:canonisation} in the sense that extreme
amenability of $\fG$ is equivalent to some form of the statement of 
the canonisation theorem? 
More precisely, we ask the following question. 

\begin{quest}
Let $\fG\acts X$ be a closed permutation group on a countable domain $X$. 
Is it true that $\fG$ is extremely amenable if and only if it has the \emph{canonisation property} of Theorem~\ref{thm:canonisation}, i.e., for every 
oligomorphic permutation group $\fH \acts Y$ and every $f \colon X \to Y$ 
the set
$\overline{\fH \, f \, \fG}$
contains a function that is canonical with respect to $\fG$ and $\fH$?
\end{quest} 
We remark that the canonisation property above implies, for example, that $\fG$ preserves a linear order, as is the case when $\fG$ is extremely amenable. For when $\fH\acts X$ is any oligomorphic extremely amenable permutation group, and $g\in \overline{\fH\id\fG}$ is canonical, then it is easy to see that the preimage under $g$ of any linear order preserved by $\fH$ must be preserved by $\fG$.

After publication of a draft of the present article, Trung Van Pham provided a positive answer to the above question for the case that $\fG$ has an extremely amenable oligomorphic subgroup $\fH$. This is an important case, since the first example of an oligomorphic group $\fG$ not satisfying this condition was discovered only recently~\cite{EvansHubickaNesetril}. Pham's argument is combinatorial, using the Ramsey property; the following proof in the language of groups is the result of discussions with Antoine Mottet and Jakub Opr\v{s}al.

Assuming that $\fG$ is not extremely amenable, we show that $\overline{\fH \id\fG}=\overline{\fG}$ does not contain any canonical function with respect to $\fG$ and $\fH$. To this end, let $S$ be a compact Hausdorff space such that $\fG$ acts continuously on $S$ without fixed point. Since $\fH$ is extremely amenable, the restriction of this action $\fG\acts S$ to $\fH$ does have a fixed point $s\in S$. By restricting $\fG\acts S$ to the closure of the orbit of $s$ in $S$, we may assume that the orbit of $s$ is dense in $S$.

As in the proof of Theorem~\ref{thm:canonisation}, let $\fG$ now act on $\overline{\fG}/_\sim$ by $(\alpha, [g]_\sim)\mapsto [g\,\alpha^{-1}]_\sim$. 
Then the action $\fG\acts S$ is a factor of the action $\fG\acts \overline{\fG}/_\sim$ via the mapping $\phi\colon \overline{\fG}/_\sim \To S$ which sends every $[g]_\sim$ to the limit of $(\alpha_n^{-1}(s))_{n\in\omega}$, for any sequence $(\alpha_n)_{n\in\omega}$ converging to $g$: it is well-defined since $\fH$ fixes $s$, and if $(\beta_n)_{n\in\omega}$ is another such sequence, then $(\beta_n^{-1}\alpha_n)_{n\in\omega}$ converges to the identity, which fixes $s$, and so $(\alpha_n^{-1}(s))_{n\in\omega}$ converges to the limit of $(\beta_n^{-1}(s))_{n\in\omega}$ by continuity. Moreover, by definition $\phi$ is compatible with the two actions, i.e., $\phi([g\,\alpha^{-1}]_\sim)=\alpha(\phi([g]_\sim)$ for all $g\in\overline{\fG}$ and all $\alpha\in\fG$. 

Since $\fG\acts S$ does not have a fixed point, and since it is a factor of $\fG\acts \overline{\fG}/_\sim$, the latter cannot have a fixed point either. As in the proof of Theorem~\ref{thm:canonisation}, fixed points of $\fG\acts \overline{\fG}/_\sim$ correspond precisely to canonical functions with respect to $\fG$ and $\fH$ in $\overline{\fG}$, and we conclude that $\overline{\fG}$ does not contain any canonical function.

\bibliographystyle{plain}
\bibliography{global}

\end{document}